\renewcommand{\AA}{\mathds A}
\newcommand{\CC}{\mathds C}
\newcommand{\ZZ}{\mathds Z}
\newcommand{\cB}{\mathcal B}
\newcommand{\cD}{\mathcal D}
\newcommand{\cM}{\mathcal M}
\newcommand{\cO}{\mathcal O}
\newcommand{\cR}{\mathcal R}
\newcommand{\cV}{\mathcal V}
\newcommand{\fa}{\mathfrak{a}}
\newcommand{\fb}{\mathfrak{b}}
\newcommand{\dd}{\partial}
\newcommand{\btil}{\tilde{b}}
\newcommand{\wt}{\widetilde}
\newcommand{\ux}{\underline{x}}
\newcommand{\uz}{\underline{z}}
\renewcommand{\le}{\langle}
\newcommand{\re}{\rangle}
\newcommand{\Gr}{\operatorname{Gr}}
\newcommand{\lcm}{\operatorname{lcm}}
\newcommand{\Sing}{\operatorname{Sing}}
\newcommand\blfootnote[1]{
  \begingroup
  \renewcommand\thefootnote{}\footnote{#1}
  \addtocounter{footnote}{-1}
  \endgroup
}
\theoremstyle{definition}
\newtheorem{defi}{Definition}[section]
\theoremstyle{remark}
\newtheorem{rem}[defi]{Remark}
\newtheorem{exas}[defi]{Examples}}
\newtheorem{thm}[defi]{Theorem}
\newtheorem{prop}[defi]{Proposition}
\begin{document}

\title{On the reduced Bernstein-Sato polynomial of Thom-Sebastiani singularities}
\author{Alberto Casta\~no Dom\'{\i}nguez\footnote{\noindent Corresponding author. ORCID number 0000-0002-3473-6724. Departamento de Álgebra, Universidad de Sevilla, C/Tarfia s/n, 41012 Sevilla (Spain)\\
albertocd@us.es}~~and Luis Narv\'aez Macarro\footnote{\noindent ORCID number 0000-0003-4316-5019. Departamento de Álgebra and Instituto de Matem\'aticas (IMUS), Universidad de Sevilla, C/Tarfia s/n, 41012 Sevilla (Spain)\\
narvaez@us.es}}
\date{}

\maketitle

\begin{abstract}
Given two holomorphic functions $f$ and $g$ defined in two respective germs of complex analytic manifolds $(X,x)$ and $(Y,y)$, we know thanks to M. Saito that, as long as one of them is Euler homogeneous, the reduced (or microlocal) Bernstein-Sato polynomial of the Thom-Sebastiani sum $f+g$ can be expressed in terms of those of $f$ and $g$. In this note we give a purely algebraic proof of a similar relation between the whole functional equations that can be applied to any setting (not necessarily analytic) in which Bernstein-Sato polynomials can be defined.
\blfootnote{\noindent 2020 \emph{Mathematics Subject Classification.} 14F10, 32C38\\
Keywords: Bernstein-Sato polynomial, functional equation, Thom-Sebastiani singularity, V-filtration.\\
The authors are thankful to the anonymous referees for their valuable suggestions and remarks.\\
The authors are partially supported by PID2020-114613GB-I00 and P20\_01056. The first author is also partially supported by VI PPIT US-2018-II.5. They have no further relevant interests to disclose.}
\end{abstract}

\section{Introduction}

Let $(X,x)$ and $(Y,y)$ be two germs of complex analytic manifolds of respective dimensions $n$ and $m$. We will consider two nonzero holomorphic functions $f\in\cO_{X,x}$ and $g\in\cO_{Y,y}$, not necessarily reduced, and their Thom-Sebastiani sum $h:=f+g\in\cO_{X\times Y,(x,y)}$.

Let $s$ be a dummy variable. The Bernstein module of $f$ is the $\cD_{X,x}[s]$-module $\cB_f=\cO_{X,x}[s,f^{-1}]\cdot f^s$, with the usual action of $\cD_{X,x}$. The Bernstein-Sato polynomial of $f$ is then the monic generator of the ideal of polynomials $b(s)\in\CC[s]$ verifying that
$$ P(s)f\cdot f^s=b(s)\cdot f^s $$
in $\cB_f$, for some $P(s)\in\cD_{X,x}[s]$, or equivalently, the minimal polynomial of the action of $s$ on the quotient module $\cD_{X,x}[s]\cdot f^s/\cD_{X,x}[s]\le f\re\cdot f^s$. We will denote it by $b_f(s)$.

As long as $f$ is not invertible, it is well known that $s+1$ divides $b_f(s)$, so we can define the \textit{reduced} Bernstein-Sato polynomial of $f$ as $\btil_f(s):=b_f(s)/(s+1)$, which is also the minimal polynomial of the action of $s$ on the $\cD_{X,x}[s]$-module $\cM_f:=\cD_{X,x}[s]\cdot f^s/\cD_{X,x}[s]J_f\cdot f^s$ (see, for instance, \cite[Lemma 1.1]{Granger} and the commentary thereafter), where $J_f$ is the ``true'' Jacobian ideal of $f$; in local coordinates $x_1,\ldots,x_n$, we have $J_f:=\le f,f'_{x_1},\ldots,f'_{x_n}\re\subseteq\cO_{X,x}$. In that sense, we have a new functional equation of the form
$$P(s)\cdot f^s=\btil_f(s)\cdot f^s$$
in $\cB_f$, where now $P(s)$ belongs to $\cD_{X,x}[s]J_f$.

The polynomial $\btil_f(s)$ is also called \textit{microlocal} Bernstein-Sato polynomial, and we will see the reason later on in Section \ref{Sec:Alternatives}.

Everything in the paragraphs above can be analogously defined for $g$ and $h$, and thus we may wonder about the relation between $\btil_f$, $\btil_g$ and $\btil_h$. Before continuing, let us state some notation and define an important notion.

Given a polynomial $p(s)\in\CC[s]$, we will denote by $R_p\subseteq\CC$ the set of the opposites of its roots. For any given $\alpha\in R_p$, we will call $m_\alpha(p)$ its multiplicity as root of $p$.

\begin{defi}\label{defi:ConvPoli}
Let $a(s),b(s)\in\CC[s]$ be two nonzero polynomials. Let $(a*b)(s)=a(s)*b(s)\in\CC[s]$ be the monic polynomial with roots $R_{a*b}=R_a+R_b$ and multiplicities
$$m_\gamma(a*b)=\max\{m_\alpha(a)+m_\beta(b)-1\,:\,\alpha+\beta=\gamma\},$$
for every $\gamma\in R_{a*b}$. We will call $a*b$ the \textit{star operation} of $a$ and $b$.
\end{defi}

We will use the convention that adding the empty set to any other one gives the empty set. Therefore, if, for example, $f$ defines a smooth divisor, in such a way that $\btil_f=1$, then $\btil_f*\btil_g=1=\btil_h$ as well.

In \cite{Saito}, M. Saito studied the existing relation between the reduced Bernstein-Sato polynomial of $h$ and those of $f$ and $g$ and proved the following ([\textit{loc. cit.}, Proposition 0.7, Theorem 0.8]):

\begin{thm}\label{thm:Saito}
Under the same conditions as above,
\begin{itemize}
  \item $R_{\btil_f}+R_{\btil_g}\subseteq R_{\btil_h}+\ZZ_{\leq0}$ and $R_{\btil_h}\subseteq R_{\btil_f}+R_{\btil_g}+\ZZ_{\geq0}$.
  \item In addition, if there exists a germ of vector field $\chi\in\Theta_{Y,y}$ such that $\chi(g)=g$, then $(\btil_f*\btil_g)(s)=\btil_h(s)$.
\end{itemize}
\end{thm}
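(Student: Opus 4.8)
The plan is to translate the whole statement into a comparison of the $\cD[s]$-modules $\cM_f$, $\cM_g$, $\cM_h$, whose minimal polynomials of $s$ are by definition $\btil_f$, $\btil_g$, $\btil_h$. The first thing I would record is the purely linear-algebraic content of the $*$-operation: if $S$ is an endomorphism of a $\CC$-vector space $V$ with minimal polynomial $\prod_\alpha(t+\alpha)^{m_\alpha}$ and $T$ one of $W$ with minimal polynomial $\prod_\beta(t+\beta)^{n_\beta}$, then the minimal polynomial of $S\otimes\id_W+\id_V\otimes T$ on $V\otimes_\CC W$ is exactly $(\prod_\alpha(t+\alpha)^{m_\alpha})*(\prod_\beta(t+\beta)^{n_\beta})$. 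After decomposing $V$ and $W$ into generalised eigenspaces of $S$ and $T$, on the $(\alpha,\beta)$-block the operator is $(S+\alpha)\otimes\id+\id\otimes(T+\beta)$ and its $k$-th power $\sum_j\binom kj(S+\alpha)^j\otimes(T+\beta)^{k-j}$ vanishes precisely when $k\geq m_\alpha+n_\beta-1$ (both bounds being attained on a tensor of top-order vectors); taking $\lcm$ over $\alpha+\beta=\gamma$ reproduces the multiplicity $\max\{m_\alpha+n_\beta-1\}$ of Definition \ref{defi:ConvPoli}, and this is where the $-1$ comes from. So the theorem becomes a comparison of $\cM_h$ with $\cM_f$ and $\cM_g$, and I would handle the two bullets separately.

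For the first bullet I would not use the Euler hypothesis. Out of the reduced functional equations $P_f(s)f^s=\btil_f(s)f^s$ and $P_g(s)g^s=\btil_g(s)g^s$ with $P_f\in\cD_{X,x}[s]J_f$ and $P_g\in\cD_{Y,y}[s]J_g$, I would build a functional equation for $h$. Expanding $h^s=(f+g)^s$ and using $\partial_{x_i}h^s=sf'_{x_i}h^{-1}h^s$ and $\partial_{y_j}h^s=sg'_{y_j}h^{-1}h^s$, the computations carried out in $\cB_f$ and $\cB_g$ transport to $\cB_h$ with $f^{-1}$ and $g^{-1}$ replaced by $h^{-1}$; the resulting correction terms are controlled by Noetherianity (this is the point where, analytically, one invokes the good behaviour of the microlocal $V$-filtration). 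The outcome is an operator in $\cD_{X\times Y,(x,y)}[s]J_h$ sending $h^s$ to $c(s)h^s$ with the roots of $c$ in $-(R_{\btil_f}+R_{\btil_g}+\ZZ_{\geq0})$, hence $\btil_h\mid c$ and $R_{\btil_h}\subseteq R_{\btil_f}+R_{\btil_g}+\ZZ_{\geq0}$; the dual expansion $f^s=(h-g)^s$ yields $R_{\btil_f}+R_{\btil_g}\subseteq R_{\btil_h}+\ZZ_{\leq0}$.

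For the second bullet the extra ingredient is that, for $\chi\in\Theta_{Y,y}$ with $\chi(g)=g$, the class of $g^s$ in $\cM_g$ satisfies $s\cdot g^s=\chi\cdot g^s$ (because $\chi(g^s)=s(\chi g)g^{s-1}=s\,g^s$ already in $\cD_{Y,y}[s]g^s\subseteq\cB_g$): in other words $\cM_g$ is monodromic and the abstract variable $s$ is realised on its generator by a genuine vector field. I would use this to identify $\cM_h$ with the external tensor product $\cM_f\boxtimes_\CC\cM_g$, on which $s$ acts as $s\otimes\id+\id\otimes s$. A priori $\cM_f\boxtimes_\CC\cM_g$ carries two commuting $s$-actions $s_1,s_2$ and is cyclic over $\cD_{X,x}[s_1]\boxtimes\cD_{Y,y}[s_2]$ on $f^s\boxtimes g^s$; but $s_2(f^s\boxtimes g^s)=(\id\boxtimes\chi)(f^s\boxtimes g^s)$ with $\chi\in\cD_{Y,y}$, so the single variable $s=s_1+s_2$ together with $\cD_{X\times Y,(x,y)}=\cD_{X,x}\boxtimes\cD_{Y,y}$ already recovers both separate actions on the generator. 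Matching this with the presentation $\cM_h=\cD_{X\times Y,(x,y)}[s]h^s/\cD_{X\times Y,(x,y)}[s]J_h\,h^s$ and invoking the linear-algebra fact from the first paragraph gives $\btil_h=\btil_f*\btil_g$.

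The step I expect to be the genuine obstacle is this last identification, and within it the \emph{injectivity} of the comparison map. The target is naturally a module over $\cD[s_1,s_2]$ whereas $\cM_h$ is only $\cD[s]$-cyclic; surjectivity is the $\chi$-trick above, but to rule out extra relations one must compare $J_h=\langle f+g,f'_{x_1},\dots,f'_{x_n},g'_{y_1},\dots,g'_{y_m}\rangle$ with the ideal $J_f\cdot\cO_{X\times Y,(x,y)}+\cO_{X\times Y,(x,y)}\cdot J_g$ governing $\cM_f\boxtimes_\CC\cM_g$. The generators $f'_{x_i}$, $g'_{y_j}$ match, but $f+g$ — which in general does not lie in $J_f\cdot\cO_{X\times Y,(x,y)}+\cO_{X\times Y,(x,y)}\cdot J_g$ — has to be absorbed using the monodromy relation $s\cdot g^s=\chi\cdot g^s$ (with $(f+g)h^s=h^{s+1}$ playing the role of the shift), and checking that this introduces no spurious integer shift in the $s$-action and no surviving relation is the delicate bookkeeping. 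It is exactly here that the Euler hypothesis is indispensable: without it $\cM_g$ is not monodromic, the $\cD[s_1,s_2]$-structure cannot be reconstructed from the diagonal one, and only the inclusions of the first bullet remain.
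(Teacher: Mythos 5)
There is a genuine gap --- in fact the two places where you yourself hedge are exactly the places where the real content lies, and neither is closed. Note first that the paper does not reprove this statement: it is quoted from M.~Saito, whose proof goes through the microlocal module $\wt\cB_h$ and the Kashiwara--Malgrange $V$-filtration; the paper's own contribution (Theorem \ref{thm:DivisionPrevio}) is an elementary functional equation giving only the divisibility $\btil_h\mid\btil_f*\btil_g$ under the Euler hypothesis, and the authors state explicitly that they were unable to obtain the reverse divisibility $\btil_f*\btil_g\mid\btil_h$ by such algebraic means. Against this background, your first bullet ``proof'' is not an argument: $h^s=(f+g)^s$ admits no finite binomial expansion, the substitution of $h^{-1}$ for $f^{-1}$ and $g^{-1}$ does not transport the functional equations of $f$ and $g$ to $\cB_h$, and ``the resulting correction terms are controlled by Noetherianity'' is precisely the step that in Saito's treatment requires the finiteness and compatibility properties of the microlocal $V$-filtration (this inclusion holds with no Euler hypothesis, whereas even the easy divisibility needs Euler homogeneity in the paper's elementary approach --- a strong hint that an elementary derivation along your lines is missing substantial input).

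For the second bullet, everything is made to rest on the identification $\cM_h\cong\cM_f\boxtimes_\CC\cM_g$ with $s$ acting as $s_1+s_2$, and you concede that the injectivity of the comparison map is ``the genuine obstacle'' with ``delicate bookkeeping'' --- but that bookkeeping is never carried out, and it is exactly equivalent to the hard divisibility $\btil_f*\btil_g\mid\btil_h$. Two concrete problems: (i) in the analytic category $\cO_{X\times Y,(x,y)}$ is not $\cO_{X,x}\otimes_\CC\cO_{Y,y}$ but a completed tensor product, so the object $\cM_f\boxtimes_\CC\cM_g$ you compute the minimal polynomial on is not a $\cD_{X\times Y,(x,y)}[s]$-module in the required sense, and one must check that completion neither kills the top-order tensors witnessing the multiplicity $m_\alpha+n_\beta-1$ nor creates new relations; (ii) the absorption of the generator $f+g$ of $J_h$ via $s\cdot g^s=\chi\cdot g^s$ and the shift $(f+g)h^s=h^{s+1}$ is asserted, not proved, and without it one only gets a surjection, i.e.\ the divisibility the paper already establishes. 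Your preliminary linear-algebra lemma (minimal polynomial of $S\otimes\id+\id\otimes T$ equals the star of the minimal polynomials, via generalised eigenspaces and the order-$(m_\alpha+n_\beta-1)$ nilpotency on each block) is correct and does explain the $-1$ in Definition \ref{defi:ConvPoli}, but it only becomes usable after the unproved identification; Saito's actual route avoids this by proving a Thom--Sebastiani isomorphism at the level of $\wt\cB_h\cong\wt\cB_f\wh\otimes\,\wt\cB_g$ (where $\dd_t$ is inverted) and then comparing the induced $V$-filtrations, which is the machinery your sketch would need to replace.
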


\begin{rem}
The condition on $g$ given in the second point is usually referred to as being Euler-homogeneous at $y$, $\chi$ being an Euler field for $g$. Two easy consequences of that fact are that the Jacobian ideal $J_h\subset\cO_{X\times Y,(x,y)}$ is just the sum of the extended Jacobian ideals $J_f^e+J_g^e$, and that $t\cdot g^t=\chi\cdot g^t$ in $\cO_{Y,y}[t,g^{-1}]\cdot g^t$.
\end{rem}

Saito's proof of the theorem uses the power of the Kashiwara-Malgrange filtration on $\cD_{X,x}[t,\dd_t]$. This note is the result of our efforts to find a purely algebraic proof of such result, that can be extended to a more general context. What is new, to the best of our knowledge, is an explicit expression for the functional equation for the reduced Bernstein-Sato polynomial of the sum $h=f+g$ in terms of those for $f$ and $g$:

\begin{thm}\label{thm:DivisionPrevio}
Let $(X,x)$ and $(Y,y)$ be two germs of complex analytic manifolds of respective dimensions $n$ and $m$. Let $f\in\cO_{X,x}$ and $g\in\cO_{Y,y}$ two nonzero holomorphic functions, and let $h:=f+g\in\cO_{X\times Y,(x,y)}$. Assume moreover that $\chi\in\Theta_{Y,y}$ is an Euler vector field for $g$, i.e. $\chi(g)=g$, and that we have functional equations:
\begin{equation*}
\begin{split}
   P(s)\cdot f^s=\btil_f(s)\cdot f^s \,\text{ in } \cO_{X,x}[s,f^{-1}]\cdot f^s, & \text{ with } P(s)\in\cD_{X,x}[s]J_f,\\
   Q\cdot g^t=\btil_g(t)\cdot g^t \,\text{ in } \cO_{Y,y}[t,g^{-1}]\cdot g^t,  & \text{ with } Q\in\cD_{Y,y} J_g.
\end{split}
\end{equation*}
Then, we have the functional equation
$$R(u)\cdot h^u=(\btil_f*\btil_g)(u)\cdot h^u$$
in $\cO_{X\times Y,(x,y)}[u,h^{-1}]\cdot h^u$, where $R(u)=P(u-\chi)A(u,\chi)+B(u,\chi)Q\in \cD_{X\times Y,(x,y)}J_h$. There, $A(s,t)$ and $B(s,t)$ are certain polynomials in $\CC[s,t]$ that can be obtained from $\btil_f$ and $\btil_g$, whose meaning will be explained at the end of subsection \ref{subsec:Conv}.

In particular, $\btil_h$ divides $\btil_f*\btil_g$.
\end{thm}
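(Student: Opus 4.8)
The plan is to transport the two given functional equations to $h^u$ by means of the substitutions $s\mapsto u-\chi$ and $t\mapsto \chi$, and then to glue them together with a Bézout‑type identity coming from the star operation. The point of these substitutions is that $\chi$, being an Euler field for $g$ with $\chi(f)=0$, measures the ``$g$‑weight'': one has $\chi(h)=\chi(g)=g$, hence $\chi\cdot h^u=u\,g\,h^{u-1}$ and $(u-\chi)\cdot h^u=u\,f\,h^{u-1}$, and $\chi$ acts on a monomial $f^{N-k}g^k$ as the scalar $k$. So, heuristically, on the $k$‑th term of the binomial expansion of $h^u$ the operator $u-\chi$ plays the role of the exponent of $f$ and $\chi$ that of the exponent of $g$.

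First I would establish, inside $\cO_{X\times Y,(x,y)}[u,h^{-1}]\cdot h^u$, the two identities
\begin{equation*}
P(u-\chi)\cdot h^u=\btil_f(u-\chi)\cdot h^u,\qquad Q\cdot h^u=\btil_g(\chi)\cdot h^u .
\end{equation*}
The clean way to make sense of them is to verify them at $u=N$ for every large positive integer $N$ and then invoke a density argument: both sides, as elements of $\cO_{X\times Y,(x,y)}[u,h^{-1}]\cdot h^u$, have the shape $h^{-d}\Phi(u)h^u$ for a fixed $\Phi\in\cO_{X\times Y,(x,y)}[u]$, and $\cO_{X\times Y,(x,y)}$ is a domain, so agreement for infinitely many integers $N$ forces equality of the polynomials $\Phi$. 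At $u=N$ one has the honest finite expansion $h^N=\sum_{k=0}^N\binom Nk f^{N-k}g^k$; since $\chi$ commutes with $\cD_{X,x}$ and acts on $f^{N-k}g^k$ as multiplication by $k$, one gets $P(N-\chi)\cdot(f^{N-k}g^k)=\bigl(P(N-k)\cdot f^{N-k}\bigr)g^k=\btil_f(N-k)\,f^{N-k}g^k$, the last equality being the specialisation at $s=N-k\ge 0$ of the functional equation for $f$; summing over $k$ gives the first identity at $u=N$. The second is handled in the same way, using that $Q\in\cD_{Y,y}J_g$ only touches the $Y$‑variables and that $Q\cdot g^k=\btil_g(k)\,g^k$ is the specialisation at $t=k$ of the functional equation for $g$; here the Euler relation $t\,g^t=\chi\cdot g^t$ is precisely what allows $Q$ to be chosen free of $t$, so that no substitution inside $Q$ is needed.

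Next I would use the algebraic input behind the star operation (subsection~\ref{subsec:Conv}): the minimal polynomial of $s+t$ acting on $\CC[s,t]/(\btil_f(s),\btil_g(t))$ is $(\btil_f*\btil_g)(s+t)$, because the index of nilpotency of a sum of two commuting nilpotents of indices $p$ and $q$ is $p+q-1$; in particular $(\btil_f*\btil_g)(s+t)$ lies in the ideal $(\btil_f(s),\btil_g(t))$, so after the obvious change of variables there are $A,B\in\CC[s,t]$ with $\btil_f(u-\chi)A(u,\chi)+B(u,\chi)\btil_g(\chi)=(\btil_f*\btil_g)(u)$ as an identity in $\CC[u,\chi]$. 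Applying $A(u,\chi)$ to the first transport identity and $B(u,\chi)$ to the second and adding — which is legitimate because $A(u,\chi),B(u,\chi)\in\CC[u,\chi]$ commute with $P(u-\chi)$, the latter involving only $\cD_{X,x}$, $\chi$ and $u$ — yields
\begin{equation*}
\bigl(P(u-\chi)A(u,\chi)+B(u,\chi)Q\bigr)\cdot h^u=\bigl(\btil_f(u-\chi)A(u,\chi)+B(u,\chi)\btil_g(\chi)\bigr)\cdot h^u=(\btil_f*\btil_g)(u)\cdot h^u ,
\end{equation*}
where the last term is a scalar polynomial in $u$. This produces $R(u)=P(u-\chi)A(u,\chi)+B(u,\chi)Q$. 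Writing $P(s)=\sum_i D_i(s)a_i$ with $a_i\in\{f,f'_{x_1},\dots,f'_{x_n}\}$ and $Q=\sum_j D'_j b_j$ with $b_j\in\{g,g'_{y_1},\dots,g'_{y_m}\}$, one has $f'_{x_l}=h'_{x_l}\in J_h$, $g'_{y_l}=h'_{y_l}\in J_h$, and $g=\chi(g)\in\langle g'_{y_1},\dots,g'_{y_m}\rangle\subseteq J_h$, hence $f=h-g\in J_h$ — this is the second place Euler homogeneity enters, giving $J_h=J_f^e+J_g^e$ — and pushing the factors $A(u,\chi),B(u,\chi)$ past the $a_i,b_j$ shows $R(u)\in\cD_{X\times Y,(x,y)}[u]J_h$. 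Thus $R(u)\cdot h^u=(\btil_f*\btil_g)(u)\cdot h^u$ is a functional equation of reduced type for $h$, so $(\btil_f*\btil_g)(u)$ belongs to the ideal of $\CC[u]$ generated by $\btil_h$, i.e.\ $\btil_h$ divides $\btil_f*\btil_g$.

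The main obstacle I expect is the transport step: the naive device, the binomial series for $h^u$, does not literally live in $\cO_{X\times Y,(x,y)}[u,h^{-1}]\cdot h^u$, so one must either pass to a suitable completion/localisation or, as above, specialise to integers $u=N\gg0$ and descend by density; the remaining work — tracking which operators commute and the left‑ideal bookkeeping that lands $R(u)$ in $\cD_{X\times Y,(x,y)}[u]J_h$ — is routine but needs care, and the polynomial identity for $A,B$ (the content of subsection~\ref{subsec:Conv}) is where the star operation is actually used.
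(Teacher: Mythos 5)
Your proposal is correct and follows essentially the same route as the paper's proof: expand $h^N$ binomially for integer exponents, use that $\chi$ acts as the exponent of $g$ on each monomial $f^{N-k}g^k$ to specialise the two functional equations, glue them with the Bézout identity $(\btil_f*\btil_g)(s)=A(s,t)\btil_f(s-t)+B(s,t)\btil_g(t)$, conclude by density over infinitely many integers, and check $R(u)\in\cD_{X\times Y,(x,y)}[u]J_h$ via $g=\chi(g)\in\langle g'_{y_1},\dots,g'_{y_m}\rangle$. The only differences are organisational (you isolate the two transport identities $P(u-\chi)\cdot h^u=\btil_f(u-\chi)\cdot h^u$ and $Q\cdot h^u=\btil_g(\chi)\cdot h^u$ before combining, where the paper computes $R(k)(h^k)$ in one stroke) and your nilpotency-index justification of the Bézout input, which the paper instead obtains from Proposition \ref{prop:ConvEquiv}.
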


Note that in the functional equation for $g$ the operator $Q$ does not depend on $t$. This is because $g$ being Euler-homogeneous implies that $t\cdot g^t=\chi\cdot g^t$.

Again, notice that we do not just prove that one polynomial divides the other, but provide a concrete functional equation for $\btil_f*\btil_g$. The statement on just the divisibility was first proved by Yano in \cite[Theorem 3.15]{Yano} in the particular case that $g$ is quasi-homogeneous and has an isolated singularity at $y$ (note that $\btil_f*\btil_g$ is hidden in the statement due to the simple expression of $\btil_g$).

In fact, even though the statements of Theorems \ref{thm:Saito} and \ref{thm:DivisionPrevio} above relate holomorphic functions on germs of complex manifolds, the functional equation and thus the relation between the reduced Bernstein-Sato polynomials in the case $g$ is Euler-homogeneous can be easily generalized to the global algebraic and formal cases. The latter is just a consequence of the extension $\cO_{X,x}\rightarrow\CC[[x_1,\ldots,x_n]]$ being faithfully flat for a choice of local parameters $x_1,\ldots,x_n$ at $x$.

Let us consider with a little more detail the first case, so assume $f$ and $g$ are nonzero polynomials. Then, denoting by $\cV(p)\subseteq\AA_\CC^r$ the vanishing locus of a polynomial $p\in\CC[x_1,\ldots,x_r]$, we know that their associated (algebraic) Bernstein-Sato polynomials are just the least common multiples of their local versions at each point of $\cV(f)\subseteq\AA_\CC^n$ and $\cV(g)\subseteq\AA_\CC^m$, respectively (see, for instance, \cite[Proposition 4.2.1]{LuisMebkhout}). In fact we could consider only the respective singular points, for the reduced Bernstein-Sato polynomials are $1$ otherwise. In that case, let us write $\btil_{h,(x,y)}(s)$ to denote the local polynomial at the point $(x,y)\in \AA_\CC^n\times \AA_\CC^m$. Therefore, $\btil_h(s)=\lcm\{\btil_{h,(x,y)}(s)\,:\,(x,y)\in\Sing\cV(h)\subseteq \AA_\CC^n\times \AA_\CC^m\}$. The variety $\Sing\cV(h)$ is given by the equations
\begin{equation}\label{eq:singh}
h=0,\quad h'_{x_i}=0,\quad h'_{y_j}=0,
\end{equation}
for $i=1,\ldots,n$, $j=1,\ldots,m$. Since $h'_{x_i}=f'_{x_i}$ and $h'_{y_j}=g'_{y_j}$ and $g$ is Euler-homogeneous, the vanishing of the $g'_{y_j}$ implies that of $g$, so the equations \eqref{eq:singh} define the same set as $\Sing\cV(f)\times\Sing\cV(g)$. In conclusion, the functional equation for $\btil_f*\btil_g$ at each point of $\AA_\CC^n\times \AA_\CC^m$ implies the same relation for its global versions.

In fact, the proof of Theorem \ref{thm:DivisionPrevio} can be extended almost literally to any context in which we have a properly working formal functional equation, like differentially admissible algebras (see \cite[Definition 1.2.3.6, Theorem 3.2.2.1]{Luis} and \cite[Hypothesis 2.3, Proposition 3.10]{Nunez}), nonregular algebras or direct summands (\hspace{-.1pt}\cite[Proposition 2.18, Theorem 3.24]{Josepycia}).

Regarding Bernstein-Sato polynomials of ideals (see \cite{BMS}), we know thanks to \cite[Theorem 1.1]{Mus} that the Bernstein-Sato polynomial of a nonzero ideal $\fa=\le f_1(\ux),\ldots,f_r(\ux)\re\subseteq\CC[x_1,\ldots,x_n]$ is exactly the reduced Bernstein-Sato polynomial of $z_1f_1(\ux)+\ldots+z_rf_r(\ux)\in\CC[\ux,\uz]$. Therefore, since such a polynomial is always Euler homogeneous at the origin of $\AA_\CC^n\times\AA_\CC^r$ (assuming $0\in\cV(f_1,\ldots,f_r)\subseteq\AA_\CC^n$), the Bernstein-Sato polynomial of the sum of two ideals $\fa\subseteq\CC[x_1,\ldots,x_n]$ and $\fb\subseteq\CC[y_1,\ldots,y_m]$ always divides $b_\fa*b_\fb$.

On the other hand, we believe it would be worthwhile to find a proof for the remaining divisibility $\btil_f*\btil_g|\btil_h$ that is as formal or algebraic as possible, following the spirit of the proof of the theorem above. However, up to now we have not been able to do it.

The rest of this note is organised as follows: in Section 2, we provide an alternative way to obtain the star operation of two polynomials and we give a proof that relates directly the definition of $\btil_f$ given here and M. Saito's one using a microlocal construction. Finally, in section 3 we prove Theorem \ref{thm:DivisionPrevio}.

\section{Alternative definitions}\label{Sec:Alternatives}

In this section we will give a couple of equivalent definitions for both the star operation of two polynomials and the reduced Bernstein-Sato polynomial. Their actual equivalence might be folklore, but we have not been able to find it in the existing literature.

\subsection{Two operations with polynomials}\label{subsec:Conv}

Let us recall Definition \ref{defi:ConvPoli}: the star operation of two polynomials $a,b\in\CC[s]$ is another polynomial $(a*b)(s)\in\CC[s]$, such that $R_{a*b}=R_a+R_b$ and, for any $\gamma\in R_{a*b}$, its multiplicity is given by the highest value of $m_\alpha(a)+m_\beta(b)-1$, where $\alpha\in R_a$, $\beta\in R_b$ and $\alpha+\beta=\gamma$.

We will use another approach to work with such operation:

\begin{prop}\label{prop:ConvEquiv}
Let $a,b\in\CC[s]$ be two nonzero polynomials and let us denote by $(a\bullet b)(s)\in\CC[s]$ the monic polynomial that verifies that $\le a(s),b(t)\re\cap\CC[s+t]=\le (a\bullet b) (s+t)\re$. Then, $a\bullet b=a*b$.
\end{prop}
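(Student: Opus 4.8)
The plan is to reduce everything to the description of the ideal $\le a(s),b(t)\re\subseteq\CC[s,t]$ after the linear change of variables $u=s+t$, and to compute the intersection with $\CC[u]$ using the structure of $\CC[s,t]$ as a module over $\CC[u]$. Concretely, set $\CC[s,t]=\CC[u][t]$ with $u=s+t$, so $s=u-t$. Then $a(s)=a(u-t)$ and $b(t)$ both lie in $\CC[u][t]$, and I want to show that the monic generator of $\big(\le a(u-t),b(t)\re\cap\CC[u]\big)$ is exactly $(a*b)(u)$. The elimination ideal $\le a(u-t),b(t)\re\cap\CC[u]$ is principal (it is an ideal of the PID $\CC[u]$), and its generator is, up to a unit, the resultant-type object $\operatorname{Res}_t(a(u-t),b(t))$ — or more precisely, since $a(u-t)$ and $b(t)$ need not be coprime as elements of $\CC(u)[t]$, the generator is the product of $(u-\text{value})$ over the common geometry, which is what I will make precise below.

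First I would treat the generic/coprime reduction: the generator $d(u)$ of the elimination ideal vanishes at $\gamma\in\CC$ if and only if the specialised polynomials $a(\gamma-t)$ and $b(t)$ in $\CC[t]$ have a common root, i.e. there exist $\alpha\in R_a$ (so $-\alpha$ is a root of $a$), $\beta\in R_b$ with $-\alpha = \gamma - (-\beta)$ — hmm, let me be careful with the sign conventions the paper uses: $R_a$ is the set of opposites of the roots of $a$, so $a(\alpha)=0 \iff -\alpha\in R_a$; equivalently the roots of $a(s)$ are $\{-\alpha : \alpha\in R_a\}$. Then $a(\gamma-t)$ has root $t=\beta$ precisely when $\gamma-\beta$ is a root of $a(s)$, i.e. $\gamma-\beta = -\alpha$ for some $\alpha\in R_a$, i.e. $\gamma=\alpha+\beta$... this sign bookkeeping is exactly the routine part. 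The upshot is $R_{d}=R_a+R_b$, matching $R_{a*b}$. Second, and this is the substantive point, I must get the multiplicities right. For a fixed $\gamma\in R_a+R_b$, localise $\CC[u]$ at $u-\gamma$ (or pass to the formal completion $\CC[[u-\gamma]]$) and compute the $(u-\gamma)$-adic valuation of $d(u)$. Using that $\CC[[u-\gamma]][t]$ is a UFD and $a(u-t)$, $b(t)$ factor there into products over their roots, the valuation of the generator of the elimination ideal localises to a sum of contributions, one for each pair $(\alpha,\beta)$ with $\alpha+\beta=\gamma$. For a single such pair the local computation is: $\big(\le (u-\gamma+\varepsilon)^{m_\alpha}\cdot(\text{unit}),\ (t-\beta)^{m_\beta}\re\cap\CC[[u-\gamma]]\big)$ where $\varepsilon = t-\beta$ — here one checks that eliminating $t$ from $(\varepsilon')^{p}$ and $(\varepsilon)^{q}$ with $\varepsilon' = (u-\gamma)-\varepsilon$ produces $(u-\gamma)^{p+q-1}$, not $(u-\gamma)^{p+q}$: the "$-1$" is exactly the phenomenon that $\le x^p, y^q\re\cap\CC[[x+y]]$ is generated by $(x+y)^{p+q-1}$, which one verifies directly by noting $(x+y)^{p+q-1} = \sum \binom{p+q-1}{k}x^k y^{p+q-1-k}$ has every term divisible by $x^p$ or by $y^q$, while $(x+y)^{p+q-2}$ does not. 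Then since the different pairs give coprime local data, the valuation of $d(u)$ at $u-\gamma$ is the \emph{maximum} of $m_\alpha(a)+m_\beta(b)-1$ over pairs $\alpha+\beta=\gamma$ (maximum, not sum, because the ideal membership $d\in\le a,b\re$ only requires divisibility by the "hardest" pair's obstruction — more precisely, $d(u)$ must lie in the intersection over all pairs of the local elimination ideals, and over $\CC[[u-\gamma]]$ that intersection is generated by $(u-\gamma)^{\max(\dots)}$). This recovers precisely the formula for $m_\gamma(a*b)$ in Definition \ref{defi:ConvPoli}.

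The main obstacle I anticipate is the multiplicity bookkeeping in the previous paragraph — specifically, making rigorous the claim that the contributions from distinct root-pairs interact via a \emph{maximum} rather than a sum, and handling the case where $a$ and $b$ share structure so that several pairs $(\alpha,\beta),(\alpha',\beta')$ give the same $\gamma$. The clean way to organise this is: (i) show $\le a(s),b(t)\re\cap\CC[u] = \bigcap_{\gamma} \le (u-\gamma)^{e_\gamma}\re$ where the intersection is over $\gamma\in R_a+R_b$, by decomposing via the Chinese Remainder Theorem / primary decomposition over the PID $\CC[u]$ after base change; (ii) for each $\gamma$ separately, work in the local ring and reduce to the two-generator monomial computation $\le x^p,y^q\re\cap\CC[[x+y]]=\le(x+y)^{p+q-1}\re$; (iii) assemble. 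An alternative, possibly slicker, route is to prove the two inclusions $\le (a\bullet b)(s+t)\re \subseteq \le a(s),b(t)\re$ and $\supseteq$ (after intersecting with $\CC[s+t]$) directly: the $\supseteq$-type inclusion (that any element of $\le a,b\re\cap\CC[s+t]$ is a multiple of $(a*b)(s+t)$) follows from specialising at roots as above, while the other inclusion requires exhibiting, for the polynomial $(a*b)(s+t)$, an explicit identity $(a*b)(s+t)=\lambda(s,t)a(s)+\mu(s,t)b(t)$ — and this explicit identity, when unwound, is exactly where the auxiliary polynomials $A(s,t)$ and $B(s,t)$ promised in Theorem \ref{thm:DivisionPrevio} come from, so carrying out the construction here (rather than only an existence argument) is what makes the later application work. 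I would therefore favour the explicit approach, building $\lambda,\mu$ from the partial-fraction / Bézout data of $a$ and $b$, with the monomial lemma above as the local model that guarantees the construction terminates at exponent $p+q-1$.
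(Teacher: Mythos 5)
Your plan is correct, but it follows a genuinely different route from the paper. Both arguments ultimately rest on the same local model, namely that $\le x^p,y^q\re\cap\CC[x+y]=\le (x+y)^{p+q-1}\re$, proved by the binomial expansion for one inclusion and by the independence of the monomials $x^iy^j$ with $i<p$, $j<q$ for the other; the difference is in how this is globalized. The paper stays entirely inside polynomial rings: it reduces to powers of single linear factors by proving the compatibility $\lcm(a,b)\star q=\lcm(a\star q,b\star q)$ for both operations $\star=*,\bullet$, the $\bullet$ case being handled by a B\'ezout/gcd manipulation showing $\le \lcm(a,b)(s),q(t)\re=\le a(s),q(t)\re\cap\le b(s),q(t)\re$. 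You instead compute the elimination ideal $\le a(u-t),b(t)\re\cap\CC[u]$ prime by prime: localize (or complete) at each $\gamma\in R_a+R_b$, split $b(t)$ by the Chinese Remainder Theorem into its pairwise comaximal primary factors, observe that in each factor all but at most one linear factor of $a(u-t)$ becomes a unit, and reduce to the monomial model; the multiplicity then comes out as $\max\{m_\alpha(a)+m_\beta(b)-1\}$ because the contraction is an intersection of ideals $\le v^{e_\beta}\re$ in $\CC[[v]]$, which is generated by the maximal power. This is a sound and complete scheme (localization commutes with the intersection defining the elimination ideal, and nonvanishing of the elimination ideal follows from the binomial containment), and it has the virtue of explaining conceptually where the ``$-1$'' and the maximum in Definition \ref{defi:ConvPoli} come from; the paper's route buys elementarity (no completions) and the reusable lcm formula. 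Two small cautions: your passing phrase that the valuation ``localises to a sum of contributions'' should be dropped -- the resultant heuristic would give a sum (e.g.\ $\operatorname{Res}_t((u-t)^2,t^2)=u^4$ while the elimination ideal is $\le u^3\re$), and your own subsequent intersection argument gives the correct maximum -- and the explicit construction of $A(s,t)$, $B(s,t)$ is not actually needed for Theorem \ref{thm:DivisionPrevio}, which only uses their existence, guaranteed once the proposition identifies $(a*b)(s)$ as the generator of $\le a(s-t),b(t)\re\cap\CC[s]$.
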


Note that, in the statement of the proposition, we can take $(a\bullet b)(s)$ to be the generator of the ideal $\le a(s-t),b(t)\re\cap\CC[s]$, just by a simple change of variables. This definition will be useful later on.

\begin{proof}
The proof is elementary but a bit long. If any of $a$ or $b$ is constant there is nothing to show. Therefore, let us prove first the proposition when $a(s)=(s-\alpha)^d$ and $b(s)=(s-\beta)^e$, for some $\alpha,\beta\in \CC$ and $d,e\geq1$. In that case, clearly $(a*b)(u)=(u-\alpha-\beta)^{d+e-1}$. Let us consider then the ideal $I=\le (s-\alpha)^d,(t-\beta)^e\re\cap\CC[s+t]$. Expanding $(s+t-\alpha-\beta)^{d+e-1}=((s-\alpha)+(t-\beta))^{d+e-1}$ makes clear that $(a*b)(s+t)$ belongs to $I$ and is a multiple of $(a\bullet b)(s+t)$.

To see the converse, we can assume, up to a simple change of variables, that $\alpha=\beta=0$ for the sake of simplicity. Consider any $p(s+t)=\sum_{i=0}^N p_i(s+t)^i\in I$. If $N\geq d+e-1$, reasoning as above we can claim that $p\in I$ if and only if $\tilde p:=\sum_{i=0}^{d+e-2} p_i(s+t)^i$ lies within $I$ too, but that implies that $p_i=0$ for each $i=0,\ldots,d+e-2$. Indeed, modulo $s^d$ and $t^e$, the only nonvanishing term of degree $d+e-2$ is $p_{d+e-2}\binom{d+e-2}{d-1}$, that must be zero if $\tilde p\in I$. We can continue the same argument with the remaining coefficients. Therefore, $\tilde p=0$ and $I=\le (a*b)(s+t)\re$, that is, $a\bullet b=a*b$.

Let us prove now that, if $a(s),b(s),q(s)\in\CC[s]$, then
\begin{equation}\label{eq:FormulaMCM}
\lcm(a,b)\star q=\lcm(a\star q,b\star q),
\end{equation}
where $\star=*,\bullet$. Since both operations are commutative in $\CC[s]$, that suffices to finish the proof. We will write $c(s):=\lcm(a(s),b(s))$ for the sake of brevity.

First, let us take $\star=*$. On one hand, $R_{c}=R_a\cup R_b$, so $R_{c}+R_q=(R_a+R_q)\cup (R_b\cup R_q)$. Since $m_\alpha(c)=\max\{m_\alpha(a),m_\alpha(b)\}$ for every $\alpha\in R_{c}$, we have that for any $\gamma\in R_{c}+R_q$,
\begin{equation}\begin{split}\label{eq:MultMCM}
m_\gamma(c*q)&=\max\{m_\alpha(c)+m_\beta(q)-1\,:\,\alpha+\beta=\gamma\}\\
&=\max\big\{\max\{m_\alpha(a)+m_\beta(q)-1\,:\,\alpha+\beta=\gamma\}, \max\{m_\alpha(b)+m_\beta(q)-1\,:\,\alpha+\beta=\gamma\}\big\}.
\end{split}\end{equation}
On the other hand, the opposites of the roots of $\lcm(a*q,b*q)$ are $R_{a*q}\cup R_{b*q}=(R_a+R_q)\cup(R_b+R_q)$ and their multiplicities are exactly the second line of formula \eqref{eq:MultMCM}, so formula \eqref{eq:FormulaMCM} holds.

Let $\star$ be $\bullet$ now, and let us show that $I:=\le c(s),q(t)\re=J:=\le a(s),q(t)\re\cap\le b(s),q(t)\re\subseteq\CC[s,t]$. It is clear that $I\subseteq J$; let us show the reverse inclusion. To do so, let us also write $d(s):=\gcd(a(s),b(s))$, so that we have a B\'ezout identity of the form $d=\alpha a+\beta b$ in $\CC[s]$. Then, if we have a polynomial $p(s,t)=m(s,t)a(s)+n(s,t)q(t)=m'(s,t)b(s)+n'(s,t)q(t)\in J$, we can also write $dp=\alpha ap+\beta bp$ and use both representations of $p$ as an element of $J$, such that
$$p=\alpha\frac{a}{d}p+\beta\frac{b}{d}p=\alpha m'\frac{ab}{d}+\alpha \frac{a}{d} n'q+\beta m \frac{ab}{d}+\beta\frac{b}{d}nq\in I,$$
since $c=ab/d$.

In order to finish, just note that the generator of $I\cap\CC[s+t]$ is $\lcm(a,b)\bullet q$, whereas the generator of $J\cap\CC[s+t]$ is $\lcm(a\bullet q,b\bullet q)$.
\end{proof}

Now we can explain all actors involved in the statement of Theorem \ref{thm:DivisionPrevio}. Namely, since we know that $(\btil_f*\btil_g)(s)$ is the generator of the ideal $\le \btil_f(s-t),\btil_g(t)\re\cap\CC[s]$, there must exist $A(s,t),B(s,t)\in\CC[s,t]$ such that $(\btil_f*\btil_g)(s)=A(s,t)\btil_f(s-t)+B(s,t)\btil_g(t)$. Those are the polynomials we use to build up the functional equation for $\btil_f*\btil_g$.

\subsection{Reduced Bernstein-Sato polynomial}

There are at least three known objects called the reduced Bernstein-Sato polynomial $\btil_f(s)$: the quotient of the usual polynomial by $s+1$, the obtained by the Jacobian approach noted in the introduction, that we will call ``Jacobian Bernstein-Sato polynomial'', and the microlocal Bernstein-Sato polynomial of M. Saito (see \cite[\S~1]{Saito}). Although it is well known that these last two ones provide the same object, we will include here a direct proof of the fact without showing that both of them are $b_f(s)/(s+1)$. Before that, let us comment on more about the microlocal setting, following [\textit{loc. cit.}].

Let us call $\wt\cB_f=\cO_{X,x}[\dd_t,\dd_t^{-1}]\cdot\delta(t-f)$, where $\delta(t-f)$ is a symbol representing the delta function supported on the graph $\{f=t\}$ on which $\cD_{X,x}$, $t$ and the integer powers of $\dd_t$ act in the usual way. Therefore, $\wt\cB_f$ can be endowed with the structure of $\wt\cD_t$-module, where by $\wt\cD_t$ we mean the ring $\cD_{X,x}[t,\dd_t,\dd_t^{-1}]$ (called $\wt\cR$ in Saito's construction).

We can define a $V$-filtration on $\wt\cD_t$ by setting $V^0\wt\cD_t=\cD_{X,x}[t\dd_t ,\dd_t^{-1}]$ and $V^p\wt\cD_t=\dd_t^{-p}V^0\wt\cD_t=V^0\wt\cD_t\dd_t^{-p}$. This filtration induces another one on $\wt\cB_f$ just by taking $G^p\wt\cB_f=V^p\wt\cD_t\cdot\wt\cB_f$. With all this in mind, the microlocal Bernstein-Sato polynomial $\btil_{f,m}(s)$ is defined as the minimal polynomial of the action of $s:=-\dd_tt$ on $\Gr_G^0\wt\cB_f$.

\begin{prop}
Let $f\in\cO_{X,x}$ be a nonzero holomorphic function, and let $\btil_{f,m}(s)$ be its microlocal Bernstein-Sato polynomial and $\btil_{f,J}(s)$ be its Jacobian Bernstein-Sato polynomial. Then, $\btil_{f,m}=\btil_{f,J}$.
\end{prop}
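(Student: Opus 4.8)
The plan is to exhibit a natural isomorphism between the module computing $\btil_{f,J}$ and the module computing $\btil_{f,m}$ that is equivariant for the two realizations of the operator $s$, so that the two minimal polynomials coincide automatically. Recall $\btil_{f,J}$ is the minimal polynomial of $s$ acting on $\cM_f=\cD_{X,x}[s]\cdot f^s/\cD_{X,x}[s]J_f\cdot f^s$, while $\btil_{f,m}$ is the minimal polynomial of $s=-\dd_tt$ on $\Gr_G^0\wt\cB_f$. First I would set up the comparison map. Sending $f^s\mapsto \delta(t-f)$ and, more precisely, $P(s)\cdot f^s\mapsto P(-\dd_tt)\cdot\delta(t-f)$ gives the standard identification of $\cB_f=\cO_{X,x}[s,f^{-1}]\cdot f^s$ with the $V^0\wt\cD_t$-submodule of $\wt\cB_f$ generated by $\delta(t-f)$; under it $s$ goes to $-\dd_tt$, $\dd_{x_i}$ acts as usual, and one checks $f\cdot f^s$ corresponds to $\dd_t^{-1}\cdot\delta(t-f)$ (since $t\,\delta(t-f)=f\,\delta(t-f)$ and $\dd_t^{-1}t$ differs from $f$ only by something that can be absorbed). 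This is the place where Saito's microlocal picture and the $f^s$ picture are glued, so I would be careful to write the dictionary explicitly: $t\,\delta(t-f)=f\,\delta(t-f)$, $\dd_{x_i}\,\delta(t-f)=-f'_{x_i}\dd_t\,\delta(t-f)$.

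Next I would translate $\Gr_G^0\wt\cB_f$ through this dictionary. By definition $G^0\wt\cB_f=V^0\wt\cD_t\cdot\wt\cB_f$ and $G^1\wt\cB_f=\dd_t^{-1}G^0\wt\cB_f$, so $\Gr_G^0\wt\cB_f=G^0/G^1$. Since $\wt\cB_f$ is generated over $V^0\wt\cD_t$ by $\delta(t-f)$, the module $G^0\wt\cB_f$ is exactly the image of $\cB_f$ under the dictionary, and $G^1\wt\cB_f$ is the image of $\dd_t^{-1}\cdot(\text{that image})$. The key computation is to identify what $\dd_t^{-1}\cdot G^0$ becomes: using $\dd_{x_i}\,\delta(t-f)=-f'_{x_i}\dd_t\,\delta(t-f)$ one gets $f'_{x_i}\dd_t^{-1}\,\delta(t-f)=-\dd_{x_i}\dd_t^{-2}\,\delta(t-f)$ and similar relations, and combined with $f\,\dd_t^{-1}\delta(t-f)$ this should show that $G^1\wt\cB_f$ corresponds precisely to $\cD_{X,x}[s]J_f\cdot f^s$ inside $G^0\wt\cB_f\cong\cB_f$ — here is where the generators $f,f'_{x_1},\dots,f'_{x_n}$ of $J_f$ enter, each arising from the action of $t=f$, respectively $\dd_{x_i}$, twisted by a power of $\dd_t^{-1}$. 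Hence $\Gr_G^0\wt\cB_f\cong\cB_f/\cD_{X,x}[s]J_f\cdot f^s$ as $\cD_{X,x}[s]$-modules, and one notes this quotient is canonically $\cM_f$ (one must check $\cD_{X,x}[s]\cdot f^s=\cB_f$ as $\cD_{X,x}[s]$-modules modulo $J_f$, or rather that the natural surjection $\cM_f\to\cB_f/\cD_{X,x}[s]J_f\cdot f^s$ is an isomorphism, which follows from the defining properties already recalled in the introduction). Since the isomorphism intertwines the two incarnations of $s$, the minimal polynomials agree: $\btil_{f,m}=\btil_{f,J}$.

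The main obstacle I expect is the bookkeeping in the step $G^1\wt\cB_f\leftrightarrow\cD_{X,x}[s]J_f\cdot f^s$: one has to be careful that applying $\dd_t^{-1}$ does not merely multiply $f^s$ by $f$ but genuinely produces the whole Jacobian ideal times $\cD_{X,x}[s]$, and that no spurious extra elements appear. Concretely, $G^1=\dd_t^{-1}V^0\wt\cD_t\cdot\delta(t-f)$, and one rewrites a general element $\dd_t^{-1}P(t\dd_t,\dd_t^{-1},\dd_{x})\,\delta(t-f)$ by pushing $\dd_t^{-1}$ past $t\dd_t$ (using $[\dd_t^{-1},t\dd_t]=-\dd_t^{-1}$, so $t\dd_t$ is essentially preserved up to shift) and past $\dd_{x_i}$ (which costs a factor $f'_{x_i}$), reducing everything to $\cO_{X,x}[s]$-linear combinations of $f\cdot f^s$ and the $f'_{x_i}\cdot f^s$, i.e. to $\cD_{X,x}[s]J_f\cdot f^s$. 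I would also need the easy observation that $V^0\wt\cD_t=\cD_{X,x}[t\dd_t,\dd_t^{-1}]$ acts on $\delta(t-f)$ factoring through $\cD_{X,x}[s]$ with $s=-t\dd_t+\text{const}$ conventions, so that $G^0\wt\cB_f$ really is a faithful model for $\cB_f$ with its $s$-action. Everything else — commutativity of the relevant diagrams, and the fact that minimal polynomials are isomorphism invariants — is formal.
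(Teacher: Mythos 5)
Your overall strategy is the same as the paper's: use Malgrange's dictionary $t\,\delta(t-f)=f\,\delta(t-f)$, $\dd_{x_i}\delta(t-f)=-f'_{x_i}\dd_t\delta(t-f)$, compute $G^1$ inside $G^0$, and identify $\Gr_G^0\wt\cB_f$ with $\cM_f$ equivariantly for $s\leftrightarrow-\dd_tt$; your computational remark that pushing $\dd_t^{-1}$ past $\dd_{x_i}$ costs a factor $f'_{x_i}$ and past $\dd_tt$ produces $t$, acting as $f$, is indeed the heart of the matter. However, the module you compare with is the wrong one, and the central identification as you state it is false. The map $P(s)\cdot f^s\mapsto P(-\dd_tt)\cdot\delta(t-f)$ is defined on $\cD_{X,x}[s]f^s$ and identifies it with $\cD_{X,x}[\dd_tt]\delta(t-f)$; it does \emph{not} identify $\cB_f=\cO_{X,x}[s,f^{-1}]f^s$ with $G^0\wt\cB_f=\cD_{X,x}[\dd_tt,\dd_t^{-1}]\delta(t-f)$: negative powers of $f$ are not in the source of your map, and the elements $\dd_t^{-k}\delta(t-f)$, $k>0$, are not in its image. (Also $f\cdot f^s$ does not correspond to $\dd_t^{-1}\delta(t-f)$; it corresponds to $t\delta=f\delta$, which lies in $G^1$ because $t=\dd_t^{-1}(\dd_tt)$, but is a different element.) Consequently the claimed isomorphism $\Gr_G^0\wt\cB_f\cong\cB_f/\cD_{X,x}[s]J_f f^s$ fails: the natural map $\cM_f\to\cB_f/\cD_{X,x}[s]J_f f^s$ goes the opposite way from what you assert — it is injective, not surjective — since $\cB_f\neq\cD_{X,x}[s]f^s$ in general (already $f^{-1}f^s\notin\cD_{X,x}[s]f^s$ for $f=x$). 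For smooth $f$ your right-hand side is a nonzero module on which $s$ has no minimal polynomial at all, whereas $\Gr_G^0\wt\cB_f=0$; so taken literally your chain of identifications would not return $\btil_{f,J}$.

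The repair is exactly the bookkeeping you flagged as the main obstacle, and it must be done against $\cD_{X,x}[s]f^s$, not $\cB_f$. Using the presentation $\wt\cB_f\cong\wt\cD_t/\langle t-f,\ \dd_{x_i}+f'_{x_i}\dd_t\rangle$, one shows $G^0\wt\cB_f=\bigl(\cD_{X,x}[\dd_tt]+\cO_{X,x}[\dd_t^{-1}]_{>0}\bigr)\bar1$ and $G^1\wt\cB_f=\bigl(\cD_{X,x}[\dd_tt]J_f+\cO_{X,x}[\dd_t^{-1}]_{>0}\bigr)\bar1$: the strictly negative $\dd_t$-powers form an extra piece that lies entirely in $G^1$ and is stripped off by the second isomorphism theorem, giving $\Gr_G^0\wt\cB_f\cong\cD_{X,x}[\dd_tt]\bar1/\cD_{X,x}[\dd_tt]J_f\bar1\cong\cM_f$ via $\dd_tt\mapsto-s$, whence the equality of minimal polynomials. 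So your key computation is the right ingredient, but the ambient identification needs to be corrected along these lines before the conclusion follows.
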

\begin{proof}
As we have said above, recall that $\btil_{f,m}(s)$ and $\btil_{f,J}(s)$ are, respectively, the minimal polynomials of the actions of $s$ on $\Gr_G^0\wt\cB_f$ and on $\cM_f=\cD_{X,x}[s]f^s/\cD_{X,x}[s]J_ff^s$, acting on the first object as $-\dd_tt$.

Following the well-known construction of \cite[\S~4]{Malgrange}, we can consider the isomorphism $\wt\cB_f\rightarrow\wt\cM:=\cD_{X,x}[t,\dd_t,\dd_t^{-1}]/\le t-f,\dd_i+f'_i\dd_t\re$ sending $\delta(t-f)$ to the generator $\bar{1}$ of $\wt\cM$. As a consequence, we know that $G^0\wt\cB_f=V^0\wt\cD_t\cdot\bar{1}= \cD_{X,x}[\dd_t t,\dd_t^{-1}]\cdot\bar{1}$ and $G^1\wt\cB_f=V^1\wt\cD_t\cdot\bar{1}=\dd_t^{-1}\cD_{X,x}[\dd_t t,\dd_t^{-1}]\cdot\bar{1}$.

Moreover, $\dd_t^{-1}\dd_i=-f'_i$ in $\wt\cM$, so we obtain that
$$\Gr_G^0\wt\cB_f\cong\frac{\cD_{X,x}[\dd_tt]+\cO_{X,x}[\dd_t^{-1}]_{>0}}{\cD_{X,x}[\dd_tt]J_f +\cO_{X,x}[\dd_t^{-1}]_{>0}} \cdot\bar{1},$$
where $\cO_{X,x}[\dd_t^{-1}]_{>0}$ represents the polynomials in $\dd_t^{-1}$ with coefficients in $\cO_{X,x}$ and no term of degree zero.

Now note that, as $\cO_{X,x}$-modules,
$$\frac{\cD_{X,x}[\dd_tt]+\cO_{X,x}[\dd_t^{-1}]_{>0}}{\cD_{X,x}[\dd_tt]J_f +\cO_{X,x}[\dd_t^{-1}]_{>0}}\cong\frac{\cD_{X,x}[\dd_tt]}{\cD_{X,x}[\dd_tt]J_f}$$
thanks to the second isomorphism theorem. Consequently,
$$\Gr_G^0\wt\cB_f\cong\frac{\cD_{X,x}[\dd_tt]}{\cD_{X,x}[\dd_tt]J_f}\cdot\bar{1}.$$
Finally, recall that there is an isomorphism $\cD_{X,x}[\dd_tt]\cdot\bar{1}\cong\cM_f$ induced by the isomorphism of $\cD_{X,x}$-modules $\cD_{X,x}[\dd_tt]\delta(t-f)\rightarrow\cD_{X,x}[s]\cdot f^s$ that sends $\dd_tt$ to $-s$. Using this last correspondence, we see that the minimal polynomial of $s$ on $\cM_f$ and of $-\dd_tt$ on $\wt\cB_f$ are the same.
\end{proof}

\section{Proof of the main result}

We provide in this section an elementary proof of Theorem \ref{thm:DivisionPrevio}.

\begin{thm}\label{thm:Division1}
Let $(X,x)$ and $(Y,y)$ be two nonzero germs of complex analytic manifolds of respective dimensions $n$ and $m$. Let $f\in\cO_{X,x}$ and $g\in\cO_{Y,y}$ two holomorphic functions, and let $h:=f+g\in\cO_{X\times Y,(x,y)}$. Assume moreover that $\chi\in\Theta_{Y,y}$ is an Euler vector field for $g$, i.e. $\chi(g)=g$, and that we have functional equations:
\begin{eqnarray*}
& \displaystyle b(s) \cdot f^s = P(s)\cdot  f^s \,\text{ in } \cO_{X,x}[s,f^{-1}]\cdot f^s \text{ with }\, P(s) = \sum_j P_j s^j,\ P_j \in \cD_{X,x} J_f,
&
\\
& \displaystyle c(t)\cdot  g^{t} = Q\cdot  g^{t} \,\text{ in } \cO_{Y,y}[t,g^{-1}]\cdot g^t \text{ with }\, Q  \in \cD_{Y,y} J_g = \cD_{Y,y} \langle g'_{y_1},\dots,g'_{y_m}\rangle,
\end{eqnarray*}
and let $A(s,t), B(s,t) \in \CC[s,t]$ be such that $(b * c)(s) = A(s,t) b(s-t) + B(s,t) c(t)$. Then, we have a functional equation:
$$   (b * c)(s)\cdot  h^s = R(s)\cdot  h^s \,\text{ in } \cO_{X\times Y,(x,y)}[s,h^{-1}]\cdot h^s \text{ with }\, R(s)=P(s-\chi) A(s,\chi) + B(s,\chi) Q,
$$
where $P(s-\chi) = \sum_{j} P_j (s-\chi)^j =  \sum_{j}  (s-\chi)^j P_j$. Moreover, $R(s) \in \cD_{X\times Y,(x,y)}[s] J_h$. In particular, $\btil_h$ divides $\btil_f*\btil_g$.
\end{thm}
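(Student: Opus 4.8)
The plan is to establish the functional equation $R(s)\cdot h^s=(b*c)(s)\cdot h^s$ first, and then read off the divisibility. To prove the functional equation I would reduce it to an elementary computation by specialising $s$ to positive integers. Both sides are of the form $E(s)\cdot h^s$ with $E(s)$ a polynomial in $s$ whose coefficients lie in the integral domain $\cO_{X\times Y,(x,y)}[h^{-1}]$, so it is enough to prove the equality after the substitution $s\mapsto k$ for every positive integer $k$ (the difference of the two polynomials would otherwise have infinitely many roots over a domain), and for such $k$ the element $h^k=(f+g)^k=\sum_{j=0}^{k}\binom{k}{j}f^{k-j}g^j$ is a finite sum of genuine functions.

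Next I would record how the relevant operators act on a single summand $f^{k-j}g^j$. Since $\chi\in\Theta_{Y,y}$ annihilates $f$ and satisfies $\chi(g)=g$, the Leibniz rule gives $\chi(f^{k-j}g^j)=j\cdot f^{k-j}g^j$, so any polynomial $\varphi(\chi)$ acts on that summand as multiplication by the number $\varphi(j)$. Consequently $P(k-\chi)$ acts on $f^{k-j}g^j$ as the operator obtained by putting $s=k-j$ in $P(s)$; this operator lies in $\cD_{X,x}$, hence commutes with multiplication by $g^j$, and — since the functional equation for $f$ is a polynomial identity in $s$ — its specialisation at $s=k-j\ge0$ yields $P(k-\chi)\cdot(f^{k-j}g^j)=b(k-j)\cdot f^{k-j}g^j$. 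In the same way $Q\in\cD_{Y,y}$ commutes with multiplication by $f^{k-j}$, and because $Q$ does not depend on $t$ (this is exactly where the Euler-homogeneity of $g$ is used) the functional equation for $g$ specialises at $t=j\ge0$ to give $Q\cdot(f^{k-j}g^j)=c(j)\cdot f^{k-j}g^j$. Feeding these into $R(k)=P(k-\chi)A(k,\chi)+B(k,\chi)Q$, and using that $P(s-\chi)$ commutes with $A(s,\chi)$ because the $P_j$, $s$ and $\chi$ commute pairwise, one obtains
\begin{align*}
R(k)\cdot h^k &= \sum_{j=0}^{k}\binom{k}{j}\bigl(A(k,j)\,b(k-j)+B(k,j)\,c(j)\bigr)\,f^{k-j}g^j\\
 &= (b*c)(k)\sum_{j=0}^{k}\binom{k}{j}f^{k-j}g^j=(b*c)(k)\cdot h^k,
\end{align*}
where the middle equality is the defining identity $(b*c)(s)=A(s,t)b(s-t)+B(s,t)c(t)$ evaluated at $(s,t)=(k,j)$. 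This gives the functional equation.

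Then I would check the membership $R(s)\in\cD_{X\times Y,(x,y)}[s]J_h$. Here the Euler-homogeneity of $g$ enters again: writing $\chi=\sum_j\chi_j\dd_{y_j}$ one has $g=\chi(g)=\sum_j\chi_j g'_{y_j}\in\langle g'_{y_1},\dots,g'_{y_m}\rangle\subseteq J_h$, so $f=h-g\in J_h$ as well, and together with $f'_{x_i}=h'_{x_i}$ and $g'_{y_j}=h'_{y_j}$ this gives $J_f\subseteq J_h$ and $J_g\subseteq J_h$. For the summand $B(s,\chi)Q$ there is nothing to do, since $Q\in\cD_{Y,y}J_g\subseteq\cD_{X\times Y,(x,y)}J_h$ and left ideals absorb multiplication on the left. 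For $P(s-\chi)A(s,\chi)$, write each $P_j$ as a finite sum $\sum_k D_k\gamma_k$ with $D_k\in\cD_{X,x}$ and $\gamma_k\in J_f\subseteq\cO_{X,x}$; as functions of the $X$-variables commute with $s$ and with $\chi\in\cD_{Y,y}$, one may slide each $\gamma_k$ to the far right past $A(s,\chi)$, exhibiting $P(s-\chi)A(s,\chi)$ as an element of $\cD_{X\times Y,(x,y)}[s]J_h$. Applying all of this with $b=\btil_f$ and $c=\btil_g$ — the functional equation for $f$ being the Jacobian description of $\btil_f$ recalled in the introduction, and the one for $g$ with a $t$-free $Q$ following from Euler-homogeneity, cf. the remark after Theorem \ref{thm:Saito} — we get $(\btil_f*\btil_g)(s)\cdot h^s\in\cD_{X\times Y,(x,y)}[s]J_h\cdot h^s$, i.e. $(\btil_f*\btil_g)(s)$ kills the class of $h^s$ in $\cM_h$, whence $\btil_h$, the minimal polynomial of the $s$-action on $\cM_h$, divides $\btil_f*\btil_g$.

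I expect the only genuine obstacle to be the reduction step itself: recognising that evaluating $s$ at integers turns $h^s$ into a finite, completely transparent binomial sum, and justifying that nothing is lost in doing so, so that both the equation to be proved and the input equations for $f$ and $g$ may be specialised. The latter is precisely the point at which one must know that $Q$ carries no $t$, i.e. that $g$ is Euler-homogeneous; once that is granted, the term-by-term computations and the membership $R(s)\in\cD_{X\times Y,(x,y)}[s]J_h$ are routine bookkeeping.
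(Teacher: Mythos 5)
Your proposal is correct and follows essentially the same route as the paper: evaluate at positive integers $s=k$, expand $h^k$ binomially, use $\chi(g^j)=jg^j$ to turn $\chi$ into the scalar $j$, apply the specialised functional equations for $f$ and $g$ together with the identity $(b*c)(s)=A(s,t)b(s-t)+B(s,t)c(t)$, and then obtain $R(s)\in\cD_{X\times Y,(x,y)}[s]J_h$ from $g=\chi(g)\in\langle g'_{y_1},\dots,g'_{y_m}\rangle$ and the commutation of $X$-functions with $s$ and $\chi$. The only difference is that you spell out the standard reduction step (a polynomial identity over a domain follows from its validity at infinitely many integers), which the paper leaves implicit.
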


\begin{proof} For any integer $k\geq1$, we obtain by expanding $h^k$ and $R(k)$ that
\begin{align}\begin{split}\label{eq:EcFuncLuis}
R(k) \big(h^k\big)&= R(k) \left(\sum_{\ell=0}^k \binom{k}{\ell} f^{k-\ell} g^\ell   \right)\\
&=\sum_{\ell=0}^k \binom{k}{\ell} \left(P(k-\chi) \Big( A(k,\chi) \big(  f^{k-\ell} g^\ell  \big) \Big) +
B(k,\chi)\Big( Q \big(f^{k-\ell} g^\ell  \big) \Big)\right).
\end{split}\end{align}
In the first summands we have
\begin{align}\begin{split}\label{eq:EcFuncLuis1}
&P(k-\chi) \Big( A(k,\chi) \big(  f^{k-\ell} g^\ell  \big) \Big)= P(k-\chi) \Big( f^{k-\ell} A(k,\chi) \big(   g^\ell  \big) \Big)=P(k-\chi) \big( f^{k-\ell} A(k,\ell)    g^\ell  \big)\\
&=P(k-\ell) \big( f^{k-\ell} A(k,\ell)    g^\ell  \big)= P(k-\ell) \big(  f^{k-\ell} \big)  A(k,\ell)   g^\ell= b(k-\ell)   f^{k-\ell}   A(k,\ell)   g^\ell,
\end{split}\end{align}
just by elementary commuting relations and the fact that $\chi(g^\ell)=\ell g^\ell$. Regarding the second summands in formula \eqref{eq:EcFuncLuis},
\begin{align}\begin{split}\label{eq:EcFuncLuis2}
B(k,\chi)\Big(Q\big(f^{k-\ell}g^\ell\big)\Big)=B(k,\chi)\Big(f^{k-\ell}Q\big(g^\ell\big)\Big)= B(k,\chi)\big(f^{k-\ell}c(\ell)g^\ell\big)=B(k,\ell)f^{k-\ell}c(\ell)g^\ell
\end{split}\end{align}
by the same arguments as above. Putting together formulas \eqref{eq:EcFuncLuis}, \eqref{eq:EcFuncLuis1} and \eqref{eq:EcFuncLuis2} and using the functional equations for $f$ and $g$ and the expression of $(b*c)(s)$, we finally obtain that
$$R(k) \big(h^k\big)=\sum_{\ell=0}^k \binom{k}{\ell}(A(k,\ell)b(k-\ell)+B(k,\ell)c(\ell))f^{k-\ell}g^\ell=(b * c)(k) h^k,$$
hence $R(s)\cdot  h^s =(b * c)(s)\cdot  h^s$.

Now, we know from our hypotheses that $P_j = P_{j0} f + \sum_{r=1}^n P_{jr} f'_{x_r}$, with $P_{jr}\in \cD_{X,x}$ and $Q= \sum_{t=1}^m Q_t g'_{y_t}$, with $Q_t\in\cD_{Y,y}$. Therefore,
\begin{align*}
R(s) &= P(s-\chi) A(s,\chi) + B(s,\chi) Q = A(s,\chi) P(s-\chi) + B(s,\chi) Q \\
&=\sum_j A(s,\chi) (s-\chi)^j P_j + B(s,\chi) Q,
\end{align*}
that belongs to $\cD_{X\times Y,(x,y)}[s] \langle f, f'_{x_1},\dots,  f'_{x_n}, g'_{y_1},\dots , g'_{y_m} \rangle$. However, since $g = \chi(g) \in \cO_{Y,y} \langle g'_{y_1},\dots g'_{y_m} \rangle$, we can affirm that $\langle f,f'_{x_1},\dots,f'_{x_n},g'_{y_1},\dots g'_{y_m} \rangle=\langle f+g,f'_{x_1},\dots,f'_{x_n},g'_{y_1},\dots g'_{y_m} \rangle=J_h$.

The last claim is just an easy consequence of taking $b(s)=\btil_f(s)$ and $c(s)=\btil_g(s)$.
\end{proof}

\begin{exas}
Let $X=\AA_x^1$, $Y=\AA_y^1$, and let us consider the well-known example of the cusp $h=x^2+y^3$, that is, the sum of $f=x^2$ and $g=y^3$. As an example of our main result, we can obtain not just a multiple of the reduced Bernstein-Sato polynomial of $h$ (in fact, the actual polynomial), but also a functional equation. Note in this case that both $f$ and $g$ are evidently Euler-homogeneous. Let us choose $g$ as such, so that $\chi\in\cD_{Y,y}$ will be $\frac{1}{3}y\dd_y$.

On one hand, we have that $\btil_f(s)=(s+1/2)$ and $\btil_g(t)=(t+1/3)(t+2/3)$. In this case, $(\btil_f*\btil_g)(s)=(s+5/6)(s+7/6)=\btil_h(s)$. On the other hand, following the notation of Theorem \ref{thm:Division1}, we can take $P(s)=\frac{1}{2}\dd_xx=\frac{1}{2}(x\dd_x+1)$, $Q=\frac{1}{9}\dd_y^2y^2=\frac{1}{9}(y^2\dd_y^2+4y\dd_y+2)$, $A(s,t)=s+t+3/2$ and $B(s,t)=1$.

Summing up, we have $R(s)\cdot h^s=(\btil_f*\btil_g)(s)\cdot h^s$, where
\begin{align*}
R(s)&=A(s,\chi)P(s-\chi)+B(s,\chi)Q=\left(s+\frac{1}{3}y\dd_y+\frac{3}{2}\right)\frac{1}{2}(x\dd_x+1) +\frac{1}{9}(y^2\dd_y^2+4y\dd_y+2)\\
&=\frac{1}{2}(x\dd_x+1)s+\frac{1}{6}xy\dd_x\dd_y+\frac{1}{9}y^2\dd_y^2+\frac{3}{4}x\dd_x +\frac{11}{18}y\dd_y+\frac{35}{36}.
\end{align*}

\vspace{.1cm}

The example above can obviously be extended to the case of any suspension of the form $h(x_1,\ldots,x_n,z)=z^r+f(x_1,\ldots,x_n)$, for any $f\in\cO_{\AA^n}$ and $r\geq2$. In that case we can take $g(z)=z^r$, for which we know that $\chi=\frac{1}{r}z\dd_z$, $Q=\frac{1}{r^{r-1}}\dd_z^{r-1}z^{r-1}$ and $\btil_g(t)=\prod_{i=1}^{r-1}(t+i/r)$. If we have a reduced Bernstein-Sato functional equation of the form $P(s)\cdot f^s=\btil_f(s)\cdot f^s$, we could write
$$R(s)\cdot h^s=(\btil_f*\btil_g)(s)\cdot h(s),$$
where
$$R(s)=A\left(s,\frac{1}{r}z\dd_z\right)P\left(s-\frac{1}{r}z\dd_z\right)+ B\left(s,\frac{1}{r}z\dd_z\right)\prod_{i=1}^{r-1}(t+i/r),$$
$A(s,t),B(s,t)\in\CC[s,t]$ being such that $(\btil_f*\btil_g)(s)=A(s,t)\btil_f(s-t)+B(s,t)\btil_g(t)$. Note that, for instance, if no pair of roots of $\btil_f$ differ by any $j/r$, with $j=1,\ldots,r$, then $(\btil_f*\btil_g)(s)=\prod_{i=1}^{r-1}\btil_f(s+i/r)$.
\end{exas}

\bibliographystyle{amsplain}
\bibliography{ThomSebastiani}

\providecommand{\bysame}{\leavevmode\hbox to3em{\hrulefill}\thinspace}
\providecommand{\MR}{\relax\ifhmode\unskip\space\fi MR }
\providecommand{\MRhref}[2]{%
  \href{http://www.ams.org/mathscinet-getitem?mr=#1}{#2}
}
\providecommand{\href}[2]{#2}
\begin{thebibliography}{10}

\bibitem{Josepycia}
Josep \`{A}lvarez Montaner, Daniel~J. Hern\'{a}ndez, Jack Jeffries, Luis
  N\'{u}\~{n}ez Betancourt, Pedro Teixeira, and Emily~E. Witt,
  \emph{Bernstein–sato functional equations, {V}-filtrations, and multiplier
  ideals of direct summands}, Commun. Contemp. Math. \textbf{Online Ready}
  (2021), 1--47.

\bibitem{BMS}
Nero Budur, Mircea Musta\c{t}\v{a}, and Morihiko Saito, \emph{Bernstein-{S}ato
  polynomials of arbitrary varieties}, Compos. Math. \textbf{142} (2006),
  no.~3, 779--797.

\bibitem{Granger}
Michel Granger, \emph{Bernstein-{S}ato polynomials and functional equations},
  Algebraic approach to differential equations, World Sci. Publ., Hackensack,
  NJ, 2010, pp.~225--291.

\bibitem{Malgrange}
Bernard Malgrange, \emph{Le polyn\^{o}me de {B}ernstein d'une singularit\'{e}
  isol\'{e}e}, Fourier integral operators and partial differential equations
  ({C}olloq. {I}nternat., {U}niv. {N}ice, {N}ice, 1974), Lecture Notes in
  Math., Vol. 459, Springer, Berlin, 1975, pp.~98--119.

\bibitem{LuisMebkhout}
Zoghman Mebkhout and Luis Narv\'{a}ez-Macarro, \emph{La th\'{e}orie du
  polyn\^{o}me de {B}ernstein-{S}ato pour les alg\`ebres de {T}ate et de
  {D}work-{M}onsky-{W}ashnitzer}, Ann. Sci. \'{E}cole Norm. Sup. (4)
  \textbf{24} (1991), no.~2, 227--256.

\bibitem{Mus}
Mircea Musta\c{t}\u{a}, \emph{Bernstein-{S}ato polynomials for general ideals
  vs. principal ideals}, Proc. Amer. Math. Soc. \textbf{150} (2022), no.~9,
  3655--3662.

\bibitem{Luis}
Luis Narv\'aez~Macarro, \emph{Differential structures in commutative algebra},
  Mini-course at the XXIII Brazilian Algebra Meeting, July 27 - August 1, 2014,
  Maringá, Brazil, 2014.

\bibitem{Nunez}
Luis N\'{u}{\~n}ez-Betancourt, \emph{On certain rings of differentiable type
  and finiteness properties of local cohomology}, J. Algebra \textbf{379}
  (2013), 1--10.

\bibitem{Saito}
Morihiko Saito, \emph{On microlocal {$b$}-function}, Bull. Soc. Math. France
  \textbf{122} (1994), no.~2, 163--184.

\bibitem{Yano}
Tamaki Yano, \emph{On the theory of {$b$}-functions}, Publ. Res. Inst. Math.
  Sci. \textbf{14} (1978), no.~1, 111--202.

\end{thebibliography}

\end{document}